        \pgfplotsset{compat = 1.3}
        \pgfplotsset{minor grid style={dotted}} \pgfplotsset{major grid
        style={dashed}}
        \pgfplotsset{every x tick label/.append style={font=\footnotesize,
        yshift=0.25ex}}
        \pgfplotsset{every y tick label/.append
        style={font=\footnotesize, xshift=0.25ex}}
\definecolor{colorclassyorange}{rgb}{0.95000,0.32500,0.09800}
\definecolor{colorchromeyellow}{rgb}{1.00000,0.6549,0}%
\definecolor{colorpaleyellow}{rgb}{1.00000,0.8549,0.1}%
\definecolor{colorclassyblue}{rgb}{0.00000,0.44706,0.74118}%
\definecolor{colorpurple}{rgb}{0.49400,0.18400,0.55600}%
\definecolor{colorfuschia}{rgb}{0.95039,0.0,0.95039}%
\definecolor{colorlemongreen}{rgb}{0.6,0.8,0}%
\definecolor{colorreal}{rgb}{0.92941,0.79412,0.12549}%
\definecolor{colorimag}{rgb}{0.00000,0.49804,0.00000}%
\definecolor{colorabs}{rgb}{1.00000,0.00000,0.00000}%
\newcommand{\uu}{\MM{u}} 
\newcommand{\uuz}{\MM{u_0}}
\newcommand{\uun}{\MM{u_1}}
\newcommand{\ww}{\MM{w}}
\newcommand{\xx}{\MM{x}}
\newcommand{\UU}{\MM{U}} 
\newcommand{\YY}{\MM{Y}}
\numberwithin{equation}{section}
\begin{document}

\title{Exponentially fitted methods that preserve conservation laws}

\author{Dajana Conte, Gianluca Frasca-Caccia\footnote{Corresponding author: gfrascacaccia@unisa.it}}
\date{\normalsize Department of Mathematics, University of Salerno,\\ Via Giovanni Paolo II n. 132, 84084 Fisciano (SA), Italy}
\maketitle

\abstract 
The exponential fitting technique uses information on the expected behaviour of the solution of a differential problem to define accurate and efficient numerical methods. In particular, exponentially fitted methods are very effective when applied to problems with oscillatory solutions. In this cases, compared to standard methods, they have proved to be very accurate even using large integration steps.

In this paper we consider exponentially fitted Runge-Kutta methods and we give characterizations of those that preserve local conservation laws of linear and quadratic quantities.

As benchmark problems we consider wave equations arising as models in several fields such as fluid dynamics and quantum physics, and derive exponentially fitted methods that preserve their conservation laws of mass (or charge) and momentum. The proposed methods are applied to approximate breather wave solutions and are compared to other known methods of the same order.\vspace{.2cm}

\textbf{Keywords:} Exponential fitting; Conservation laws; Symplectic Runge-Kutta methods; Modified Korteweg-de Vries; Nonlinear Schr\"odinger; Breathers.
\section{Introduction}
We consider a Partial Differential Equation (PDE) in the form
\begin{equation}\label{PDE}
u_t=f(x,t,[u]_x), \qquad (a,b)\times(t_0,T)
\end{equation}
equipped with suitable initial and boundary conditions. Here and henceforth $u(x,t)\in\mathbb{R}$, and given a generic regular function $h$, the symbol $[h]_x$ denotes $h$ and its spatial derivatives. We consider problem (\ref{PDE}) for simplicity of discussion but the arguments can be straightforwardly applied to systems of PDEs, PDEs in multiple dimensions, and PDEs of the form
$$D_t(g(x,[u]_x))=f(x,t,[u]_x),$$
where here and henceforth $D_z$ denotes the total derivative with respect to $z$, and $g$ is linear homogeneous in $[u]_x$. PDEs that arise as a realistic model for a natural phenomenon typically have conservation laws. A conservation law is a total divergence,
\begin{equation}\label{CLaw}
\text{Div\,} \mathbf{F}=D_xF(x,t,[u]_x,[u_t]_x)+D_tG(x,[u]_x),
\end{equation}
that vanishes on solutions of the PDE (\ref{PDE}). The functions $F$ and $G$ are the flux and the density of the conservation law. The density $G$ usually has a physical interpretation such as charge, mass, momentum or energy.

When the boundary conditions are conservative (e.g., periodic) integration in space of (\ref{CLaw}) calculated on solutions of (\ref{PDE}) yields
$$D_t\int_a^b G(x,[u]_x)\,\mathrm{d}x=0,$$
therefore, 
\begin{equation}\label{inv}
\int_a^b G(x,[u]_x)\,\mathrm{d}x
\end{equation} 
is a global invariant.

It is well known that numerical methods that preserve global invariants perform better than standard ones as the accumulation of the error is slower over long times \cite{defrutosSS,DuranSS,Hojj}. On one hand, methods that preserve conservation laws have this same property \cite{NLS}. On the other hand, conservation laws govern the local variation of the conserved quantities, and so a numerical scheme must satisfy stronger constraints to preserve them.

Finite difference methods that preserve conservation laws have been introduced for a range of numerical equations by using a novel technique in \cite{IMA,FoCM,mKdV,NLS}. Although these particular geometric integrators perform better than standard methods, when the solution of the problem is highly oscillatory they require very small stepsizes in order to correctly reproduce the oscillations of the solution.

In this paper we focus on problems whose oscillatory behaviour is known a priori. Problems of this kind are for example breather solutions. In the context of ODEs breathers solutions  are periodic in time with energy localized in only a few low-frequency normal modes \cite{FIK,CEB,CE}. For wave models such as the modified Korteweg-de Vries equation \cite{CJ,MP}, nonlinear Schr\"odinger equation \cite{AEK,TW}, sine-Gordon equation \cite{sGbreath,DX,NLS}, Gardner equation \cite{MP,Sbreath}, breathers are particular solutions that oscillate in time (resp. space) and are localized in space (resp. time) \cite{AKNS}.

When the behaviour of the solution is known a priori, exponential fitting techniques can be used to derive numerical methods that accurately solve the problem at hand with relatively large stepsizes. In fact, exponentially fitted methods are obtained by requiring exact integration of the solutions in a fitting space generated by a suitable set of functions \cite{Ixaru,Pater,DP,CDP}. The choice of the fitting space is made on the basis of the expected behaviour of the solution. For example, when the solution is known to be oscillatory the fitting space can be conveniently defined by combinations of sine and cosine functions that can capture the frequency of oscillation of the solution. In this paper we assume that this frequency is known or that it can be derived from the problem. However, several techniques are available from the literature to numerically estimate the frequency when unknown \cite{DMP,DEP,vDVB,VBMV,VBIM}.

Exponential fitting techniques have been used in several contexts, such as fractional differential equations \cite{BCDP}, quadrature \cite{CPquad,EW,E01,ConteJCAM,CIPS}, interpolation \cite{MVB}, {\em peer} integrators for ODEs and PDEs \cite{Conte1,Conte2}, integral equations \cite{CIP}, boundary value problems \cite{VanD}.

Particular attention has been devoted to the development and analysis of exponentially fitted Runge-Kutta (EFRK) methods (see, e.g., \cite{DEP,DIP,VBIM,PN,DPS,Simos,P1,VVV}), and in particular of symplectic methods (see, e.g., \cite{CFMR,VdV,vDVB,TVA}) that have extended to the context of exponential fitting the well known theory of Runge-Kutta methods \cite{HLW,SSC}. 

In this spirit it has been proved in \cite{CFMR} that symplectic EFRK methods conserve all linear and quadratic invariants of a system of ODEs. This extends the analogue result that is well known to hold for symplectic Runge-Kutta methods \cite{CIZ}.

In the context of PDEs symplectic Runge-Kutta methods preserve all the conservation laws with either linear or quadratic density $G$ of a suitable space discretization \cite{FoCM}. 

The scope of this paper is twofold. On one hand, we prove that symplectic EFRK methods satisfy a similar property. However, only some of them preserve conservation laws with quadratic non homogeneous density $G$. On the other hand, we show the benefits of using symplectic EFRK methods to solve PDEs with breather type solutions. 

The paper is organized as follows. In Section~\ref{sec:space} we show how to use the technique in \cite{FoCM} to define space discretizations that preserve conservation laws. In Section~\ref{sec:time} we deal with the time discretization and prove original results on the conservation properties of EFRK methods. In Section~\ref{sec:methods} we consider three different equations: the linear advection equation, the modified KdV equation and the nonlinear Schr\"odinger equation. The latter provides an application of the new theory in this paper to a system of PDEs. For each of these three PDEs we introduce a conservative exponentially fitted numerical method and give explicit formulae of the discrete conservation laws satisfied by the numerical solutions. In Section~\ref{sec:numerics} we consider oscillatory and breather solutions and use them as benchmark problems to show the conservative properties of the proposed methods and their advantages compared to classic symplectic methods of the same order. Finally, in Section~\ref{sec:concl} we draw some conclusive remarks.
\section{Space discretization}\label{sec:space}
In this section we show how to find finite difference semidiscretizations of (\ref{PDE}) that preserve conservation laws. We first select $q$ conservation laws to preserve and write them in characteristic form,
\begin{equation}\label{char}
D_xF_{\ell}(x,t,[u]_x,[u_t]_x)+D_tG_{\ell}(x,[u]_x)=\mathcal{A}\mathcal{Q}_\ell,\qquad \ell=1,\ldots,q,
\end{equation}
where 
$$ \mathcal{A}=u_t-f(x,t,[u]_x),$$
and $\mathcal{Q}_\ell$ is a multiplier function  called the characteristic of the $\ell$-th conservation law to preserve, that may depend on $x$, $t$, $u$ and its partial derivatives \cite{Olver}.

Given a uniform grid of nodes, $$x_m=a+(m-1)\Delta x,\qquad m=1,\ldots,M,\qquad \Delta x=\frac{b-a}{M-1},$$ 
we define the vectors
$$\xx\in\mathbb{R}^{M},\qquad \xx_m=x_m,\qquad \UU=\UU(t)\in\mathbb{R}^{M},\qquad \UU_m(t)\simeq u(x_m,t),\qquad m=1,\ldots,M,$$
the forward shift,
$$S_{\Delta x}(f(x_m,t,U_m))=f(x_{m+1},t,U_{m+1}),$$
and the forward difference operator
$$D_{\Delta x}=\frac{S_{\Delta x}-I}{\Delta x},$$
where $f$ is a generic function defined on the grid, and $I$ is the identity operator. We look for a space discretization of (\ref{PDE}),
\begin{equation}\label{SD}
\widetilde{\mathcal{A}}(\xx,t,\UU):=D_t{\UU}-\widetilde{f}(\xx,t,\UU)=0,
\end{equation}
and for approximations $\widetilde{\mathcal{Q}}_\ell$ of $\mathcal{Q}_\ell$ such that 
\begin{equation}\label{SDCLaw}
\widetilde{\mathcal{A}}\widetilde{\mathcal{Q}}_\ell=D_{\Delta x}\widetilde F_\ell(\xx,t,\UU,\UU_t)+D_t\widetilde G_\ell(\xx,\UU),\qquad \ell=1,\ldots,q,
\end{equation}
with $\widetilde{F}_\ell\approx F_\ell$ and $\widetilde{G}_\ell\approx G_\ell$. The following theorem gives a characterization of the space of semidiscrete divergences,
\begin{equation}\label{eq:SDdiv}
D_{\Delta x}\widetilde F(\xx,t,\UU,\UU_t)+D_t\widetilde G(\xx,\UU),
\end{equation}
\begin{theorem}[\cite{FoCM}]\label{theo:euler}
The kernel of the semidiscrete Euler operator
$$\mathsf{E}=\sum_{i,j}S_{\Delta x}^{-i}(-D_t)^j\frac{\partial}{\partial(D_t^jU_i^\alpha)},$$
is the space of semidiscrete divergences (\ref{eq:SDdiv}).
\end{theorem}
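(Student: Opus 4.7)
The plan is to establish both inclusions: first that every semidiscrete divergence of the form (\ref{eq:SDdiv}) lies in $\ker\mathsf{E}$, and then that every element of $\ker\mathsf{E}$ admits such a divergence representation. This mirrors the classical null-Lagrangian theorem in the calculus of variations, so I would adapt its standard proof, pairing a direct computation for the easy inclusion with a scaling-homotopy argument for the converse.

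For the easy direction I would verify separately that $\mathsf{E}(D_{\Delta x}\widetilde F)=0$ and $\mathsf{E}(D_t\widetilde G)=0$ for arbitrary smooth $\widetilde F,\widetilde G$. Writing $D_{\Delta x}=\Delta x^{-1}(S_{\Delta x}-I)$ and applying the chain rule, the $S_{\Delta x}$-piece inside $\mathsf{E}$ is absorbed by the outer shift $S_{\Delta x}^{-i}$ through a re-indexing $i\mapsto i-1$ and cancels the $I$-piece exactly. The second identity is the discrete integration-by-parts in $t$: Leibniz gives
$$\frac{\partial(D_t\widetilde G)}{\partial(D_t^jU_i^\alpha)}=D_t\frac{\partial\widetilde G}{\partial(D_t^jU_i^\alpha)}+\frac{\partial\widetilde G}{\partial(D_t^{j-1}U_i^\alpha)},$$
and the factor $(-D_t)^j$ turns the sum over $j$ into a telescoping sum that vanishes. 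These are precisely the summation-by-parts cancellations hard-coded into the definition of $\mathsf{E}$.

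For the converse, take $P\in\ker\mathsf{E}$ and split $P=(P-P|_{\UU=0})+P|_{\UU=0}$. The residual depends only on $(\xx,t)$ and can trivially be written as $D_t\widetilde G(\xx,t)$ after antidifferentiation in $t$. For the nontrivial piece I would use the scaling homotopy $\UU\mapsto\lambda\UU$, $\lambda\in[0,1]$, which yields
$$P-P|_{\UU=0}=\int_0^1\sum_{i,j}(D_t^jU_i^\alpha)\,\frac{\partial P}{\partial(D_t^jU_i^\alpha)}(\lambda\UU)\,\ddd\lambda.$$
The key algebraic step is a semidiscrete Green identity that recasts the integrand as $U_0^\alpha\,\mathsf{E}(P)(\lambda\UU)+D_{\Delta x}\widetilde F_\lambda+D_t\widetilde G_\lambda$, the correction terms being produced by moving each $S_{\Delta x}^{-i}$ and each $(-D_t)^j$ off $\partial P/\partial(D_t^jU_i^\alpha)$ and onto $D_t^jU_i^\alpha$ via exactly the manipulations of the easy direction. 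Since $\mathsf{E}(P)\equiv 0$, integrating over $\lambda$ collects everything into the desired semidiscrete divergence.

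The main obstacle is arranging the Green identity so that the boundary pieces land in $\widetilde F(\xx,t,\UU,\UU_t)$ and $\widetilde G(\xx,\UU)$ of the low-order regularity stated in (\ref{eq:SDdiv}), rather than in a mixed object with spurious spatial shifts underneath $D_t$ or higher time derivatives underneath $D_{\Delta x}$. I would handle this by peeling off the $j=0$ terms first and absorbing them into $\widetilde F$ via the spatial summation-by-parts, then processing the $j\ge 1$ terms by iterated integration-by-parts in $t$, arranging the surviving temporal-boundary contributions to depend on $\UU$ only. This combinatorial bookkeeping across the double sum in $\mathsf{E}$ is the crux of the argument; once it is in place, combining with the easy direction completes the proof.
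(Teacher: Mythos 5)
The paper does not actually prove this theorem: it is quoted from \cite{FoCM}, so there is no in-paper argument to compare against. Your two-inclusion plan --- direct summation-by-parts to show that semidiscrete divergences lie in $\ker\mathsf{E}$, and a scaling homotopy combined with a semidiscrete Green identity for the converse --- is exactly the standard route taken in that reference and its continuous and fully discrete antecedents. Your computation of the easy inclusion is correct: the reindexing $i\mapsto i-1$ absorbs the shift and annihilates the $D_{\Delta x}$ part, and the commutation identity you quote turns the $j$-sum into a telescope for the $D_t$ part.

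Two caveats on the converse. First, a concrete slip: the residual $P\vert_{\UU=0}$ must not be written as $D_t\widetilde G(\xx,t)$, because the density in (\ref{eq:SDdiv}) has the form $\widetilde G(\xx,\UU)$ with no explicit $t$-dependence; the correct disposal is a spatial antidifference, writing $P\vert_{\UU=0}=D_{\Delta x}\bigl(\Delta x\sum_{k<m}P\vert_{\UU=0}\bigr)$, which lands in $\widetilde F(\xx,t,\cdot,\cdot)$ where explicit $t$-dependence is permitted. Second, the step you yourself identify as the crux --- arranging the Green identity so that the temporal boundary terms depend on $\UU$ only while the spatial flux involves at most $\UU_t$ --- is precisely the nontrivial content of the theorem, and it is left as a plan rather than executed. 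For the first-order-in-time class of expressions actually used in this paper (only $j\le 1$ appears in the double sum) the bookkeeping does close up in the order you describe, peeling off $j=0$ into $\widetilde F$ and integrating the $j=1$ terms by parts in $t$; but as written the proposal is an outline of the standard proof rather than a complete argument.
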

On the basis of this result, the following strategy is here used to find bespoke finite difference space discretizations (\ref{SD}) with conservation laws (\ref{SDCLaw}).

\begin{enumerate}
\item Select a set of nodes and define on it generic finite difference approximations $\widetilde{\mathcal{A}}\approx\mathcal{A}$ and $\widetilde{\mathcal{Q}}_1\approx\mathcal{Q}_1$. The coefficients defining these approximations are free parameters to be determined.
\item Fix some of the parameters by solving desired order conditions.
\item Fix more parameters by solving symbolically
\begin{equation}\label{eq:sdEuler}
\mathsf{E}(\mathcal{\widetilde A \widetilde Q}_1)=0.
\end{equation}
As a consequence of Theorem \ref{theo:euler} there exist $\widetilde{F}_1\approx F_1$ and $\widetilde{G}_1\approx G_1$ such that 
$$\widetilde{\mathcal A}\widetilde{\mathcal Q}_1=D_{\Delta x} \widetilde{F}_1 + D_t \widetilde{G}_1$$
is a conservation law of $\widetilde{\mathcal A}$.
\item Iterate to preserve more conservation laws replacing $\widetilde{\mathcal Q}_1$ with $\widetilde{\mathcal Q}_\ell$ until equation (\ref{eq:sdEuler}) admits solutions.
\end{enumerate}
Typically, at the end of the procedure above one obtains a family of methods that depend on some remaining parameters that can be arbitrarily chosen \cite{FoCM,IMA,mKdV,NLS}. In general, optimal values of these parameters are not available a priori. However, these can be identified by minimizing an estimate of the local truncation error for the specific problem at hand, as done in \cite{Singh}. 

The purpose of this paper is neither finding all possible semidiscretizations with the desired conservation laws, nor identifying the most accurate of these schemes for a given problem. Therefore, in the following we set equal zero any remaining parameter that can be arbitrarily chosen at the end of the procedure above. Hence, for each of the equations studied in this paper, we focus on only one of infinitely many possible conservative semidiscretizations.
\section{Exponentially fitted time integration}\label{sec:time}
For the time integration of (\ref{PDE}) we consider here only one-step methods and we discuss only the first step of integration that can be similarly iterated.

Let be $\uuz$ the vector of the values of the initial condition at the nodes $\xx$, and $\uun$ the approximation at the next time step,
$$t_1=t_0+\Delta t,$$
given by an $s$-stage EFRK method. An $s$-stage EFRK method applied to (\ref{SD}) amounts to
\begin{align}\label{EFRKsol}
\uun &\,= \uuz  + \Delta t\sum_{i=1}^s
b_i\widetilde{f} (\xx,t_0+c_i\Delta t , \YY_i ),\\\label{EFRKstages}
\YY_i &\,= \gamma_i \uuz + \Delta t \sum_{j=1}^s a_{i,j}\widetilde{f} (\xx,t_0+c_j\Delta t, \YY_j),\qquad i = 1,...,s,
\end{align}
where the real coefficients $\gamma_i>0,$ $a_{i,j},$ $b_i,$ and $c_i$ may depend on the time step, $\Delta t$, and on a parameter, $\omega$, that characterizes the exact solution \cite{Ixaru}. These coefficients are obtained by choosing a fitting space and requiring that (\ref{EFRKsol})--(\ref{EFRKstages}) is exact on solutions that belong to the fitting space. The choice of the fitting space is based on the expected behaviour of the solution. For example, when the solution is oscillatory, the parameter $\omega >0$ is the frequency of oscillation, and the basis of the fitting space is typically chosen as
$$\mathcal{F}=\{1,x,\ldots,x^K,\cos(\omega t),\sin(\omega t),x\cos(\omega t),x\sin(\omega t),\ldots,x^P\cos(\omega t),x^P\sin(\omega t)\}.$$  
The coefficients defining method (\ref{EFRKsol})--(\ref{EFRKstages}) are then obtained by requiring that the functionals
\begin{align}\label{eq:Lsol}
\mathcal{L}[y(t),\Delta t]:=&\,y(t+\Delta t) - y(t) - \Delta t\sum_{i=1}^s
b_i y'(t+c_i\Delta t),\\\label{eq:Lstages}
\mathcal{L}_i[y(t),\Delta t]:=&\,y(t+c_i\Delta t) - y(t) - \Delta t \sum_{j=1}^s a_{i,j}y'(t+c_j\Delta t),\qquad i = 1,...,s,
\end{align}
all vanish for any function $y$ in the fitting space generated by $\mathcal{F}$.

If, moreover, the coefficients $a_{i,j},b_i$ and $\gamma_i$ satisfy 
\begin{equation}\label{symplcond}
b_i\frac{a_{i,j}}{\gamma_i}+b_j\frac{a_{j,i}}{\gamma_j}-b_ib_j=0,\qquad 1\leq i,\,j\leq s,
\end{equation}
then the EFRK method (\ref{EFRKsol})--(\ref{EFRKstages}) is symplectic \cite{VdV}.

It is known that symplectic EFRK methods conserve all linear and quadratic invariants of a system of ODEs \cite{CFMR}. We prove here some local conservation properties of symplectic EFRK methods when applied to a space discretization (\ref{SD}) of a PDE (\ref{PDE}) with conservation laws (\ref{SDCLaw}). For clarity of notation, henceforth we drop the index $\ell$ in (\ref{SDCLaw}) and refer to a generic conservation law, 
\begin{equation}\label{eq:SDCLgen}
D_{\Delta x}\widetilde{F}(\xx,t,\UU,\UU_t)+D_t\widetilde{G}(\xx,\UU)=0,
\end{equation}
satisfied by the solutions of the semidiscretization (\ref{SD}).
\begin{theorem}\label{THnew}
The EFRK method (\ref{EFRKsol})–-(\ref{EFRKstages})
\begin{enumerate}
\item preserves the conservation law (\ref{eq:SDCLgen}) with $\widetilde{G}$ linear in $\UU$.
\item preserves the conservation laws (\ref{eq:SDCLgen}) with $\widetilde{G}$ quadratic homogeneous in $\UU$ iff its coefficients satisfy condition (\ref{symplcond}).
\end{enumerate}
\end{theorem}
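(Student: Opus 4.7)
The plan is to derive a stage-wise version of the semidiscrete conservation law (\ref{eq:SDCLgen}) and telescope it across the Runge--Kutta update. Since (\ref{SDCLaw}) is an identity in the formal variables $(\UU,\UU_t)$, evaluating it at $\UU=\YY_i$, $\UU_t=\widetilde f_i:=\widetilde f(\xx,t_0+c_i\Delta t,\YY_i)$, $t=t_0+c_i\Delta t$ kills the left-hand side, since $\widetilde{\mathcal A}=\UU_t-\widetilde f=0$ there; what remains is the stage identity
\begin{equation*}
\nabla_{\UU}\widetilde G(\xx,\YY_i)\cdot\widetilde f_i \;=\; -D_{\Delta x}\widetilde F(\xx,t_0+c_i\Delta t,\YY_i,\widetilde f_i), \qquad i=1,\dots,s.
\end{equation*}
The strategy in both parts is then to rewrite $\widetilde G(\xx,\uun)-\widetilde G(\xx,\uuz)$ in the form $\Delta t\sum_i w_i\,\nabla_{\UU}\widetilde G(\xx,\YY_i)\cdot\widetilde f_i$ for suitable weights $w_i$; the stage identity converts it into $-\Delta t\,D_{\Delta x}\widehat F$ with $\widehat F=\sum_i w_i\,\widetilde F(\xx,t_0+c_i\Delta t,\YY_i,\widetilde f_i)$, which is the desired fully discrete conservation law.

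For part~1, writing $\widetilde G(\xx,\UU)=\alpha(\xx)\cdot\UU$ (an additive constant is irrelevant under $D_t\widetilde G$) gives $\nabla_{\UU}\widetilde G=\alpha(\xx)$ independent of $\UU$, and the update (\ref{EFRKsol}) yields the rewriting with $w_i=b_i$ directly; neither the values of $\gamma_i$ nor the symplecticity condition (\ref{symplcond}) enter. For the sufficiency in part~2, write $\widetilde G(\xx,\UU)=\UU^T B(\xx)\UU$ with $B$ symmetric and expand
\begin{equation*}
\widetilde G(\xx,\uun)-\widetilde G(\xx,\uuz) \;=\; 2\Delta t\sum_i b_i\,\uuz^T B\,\widetilde f_i \;+\; \Delta t^2\sum_{i,j} b_ib_j\,\widetilde f_i^T B\,\widetilde f_j.
\end{equation*}
To bring the stage vectors in, solve (\ref{EFRKstages}) for $\uuz$, obtaining $\uuz=\gamma_i^{-1}\YY_i-\Delta t\,\gamma_i^{-1}\sum_j a_{i,j}\widetilde f_j$, and substitute into the linear cross term. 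After $(i,j)$-symmetrising the resulting quadratic contribution---legitimate because $B=B^T$---the coefficient of $\widetilde f_i^T B\,\widetilde f_j$ becomes exactly $b_ib_j-b_i a_{i,j}/\gamma_i-b_j a_{j,i}/\gamma_j$, which vanishes under (\ref{symplcond}); what remains is $2\Delta t\sum_i(b_i/\gamma_i)\,\YY_i^T B\,\widetilde f_i = \Delta t\sum_i(b_i/\gamma_i)\,\nabla_{\UU}\widetilde G(\xx,\YY_i)\cdot\widetilde f_i$ by Euler's identity for homogeneous quadratics, and the stage identity closes the argument with $w_i=b_i/\gamma_i$.

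For the necessity in part~2, I would reduce to the ODE-level converse already established in \cite{CFMR}: if (\ref{symplcond}) fails, then the EFRK method does not preserve some quadratic invariant of some linear system $\dot\UU=K\UU$, and such a pair $(K,\UU^T B\UU)$ can be viewed as a trivial linear semidiscretisation whose associated conservation law (\ref{eq:SDCLgen}) has homogeneous quadratic density $\widetilde G=\UU^T B\UU$ and vanishing flux $\widetilde F\equiv 0$, so the method fails to preserve (\ref{eq:SDCLgen}) for this choice. I expect the main obstacle to be the bookkeeping in the sufficiency of part~2, and specifically the step of substituting (\ref{EFRKstages}) into the linear cross term and then $(i,j)$-symmetrising, which has to produce \emph{exactly} the symplecticity combination and not a scaled or translated variant of it; a secondary care-point is the derivation of the stage identity itself, which requires treating (\ref{SDCLaw}) as an algebraic identity in the formal variables $(\UU,\UU_t)$ rather than only along semidiscrete solutions.
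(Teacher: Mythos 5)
Your argument is essentially the paper's own proof: the stage identity obtained by evaluating (\ref{eq:SDCLgen}) at $\UU=\YY_i$, $\UU_t=\widetilde f^i$ is exactly the paper's equations (\ref{DmF}) and (\ref{SDCLawproof}), the telescoping with weights $b_i$ handles the linear case, and the substitution of $\uuz=\gamma_i^{-1}\YY_i-\Delta t\,\gamma_i^{-1}\sum_j a_{i,j}\widetilde f^j$ into the cross term followed by $(i,j)$-symmetrisation produces the same combination $b_ib_j-b_i a_{i,j}/\gamma_i-b_j a_{j,i}/\gamma_j$ and the same final weights $b_i/\gamma_i$. The only difference is that you also sketch the necessity direction of part~2 by reduction to the ODE-level converse of \cite{CFMR}, a step the paper's proof leaves implicit.
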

\begin{proof}
\begin{enumerate}
\item As $\widetilde{G}(\xx,\UU)$ is linear in $\UU$, its $m$-th entry, $\widetilde{G}_m$, relative to the node $x_m$ is in the form
$$\widetilde{G}_m(\xx,\UU)=\ww_m^T(\xx)\UU,\qquad  \ww_m\in\mathbb{R}^M.$$
Differentiating (\ref{eq:SDCLgen}) and substituting, yields
\begin{equation}\label{DmF}
D_{\Delta x}\widetilde{F}_m(\xx,t,\UU,\widetilde{f}(\xx,t,\UU))=-\ww_m^T(\xx)\widetilde{f}(\xx,t,\UU),
\end{equation}
where $\widetilde{F}_m$ is the $m$-th entry of $\widetilde{F}(\xx,\UU)$ relative to $x_m$.
Multiplying (\ref{EFRKsol}) by $\ww_m(\xx)^T$ on the left gives
\begin{equation}\label{EFRKsolsproof}
\widetilde{G}_m(\xx,\uun) = \widetilde{G}_m(\xx,\uuz)  + \Delta t\sum_{i=1}^s
b_i\ww_m(\xx)^T\widetilde{f}^i,
\end{equation}
where here and henceforth, $$\widetilde{f}^i:= \widetilde{f}(\xx,t_0+c_i\Delta t , \YY_i ).$$
Considering (\ref{DmF}), equation (\ref{EFRKsolsproof}) shows that the solutions of (\ref{EFRKsol})--(\ref{EFRKstages}) satisfy the totally discrete conservation law
\begin{equation}\label{LinCLaw}
D_{\Delta t}(\widetilde{G}_m(\xx,\uuz))+D_{\Delta x}\left(\sum_{i=1}^sb_i \widetilde{F}_m(\xx,t_0+c_i\Delta t , \YY_i, \widetilde{f}(\xx,t_0+c_i\Delta t , \YY_i ))\right)=0,
\end{equation}
where $D_{\Delta t}$ is the forward difference operator in time.
\item As $\widetilde{G}$ is quadratic homogeneous, then there exists $\Omega_m(\xx)=\Omega_m^T(\xx)\in\mathbb{R}^{M\times M}$ such that
$$\widetilde{G}_m(\xx,\UU)=\tfrac{1}2\UU^T\Omega_m(\xx)\UU.$$
The conservation law (\ref{eq:SDCLgen}) yields
\begin{equation}\label{SDCLawproof}
D_{\Delta x}\widetilde F_m(\xx,t,\UU,\widetilde{f}(\xx,t,\UU))=-D_t\widetilde G_m(\xx,\UU)=-\UU^T\Omega_m(\xx)\widetilde{f}(\xx,t,\UU).
\end{equation}
Therefore, considering (\ref{EFRKsol}),
\begin{align}\label{step1}
\widetilde{G}_m(\xx,\uun)&\,=\tfrac{1}2( \uuz  + \Delta t\sum_{i=1}^s
b_i\widetilde{f}^i)^T\Omega_m(\xx)( \uuz  + \Delta t\sum_{i=1}^s
b_i\widetilde{f}^i)\\\nonumber
&\,=\tfrac{1}2\uuz^T\Omega_m(\xx)\uuz+\Delta t\sum_{i=1}^sb_i\uuz^T\Omega_m(\xx)\widetilde{f}^i+\tfrac{\Delta t^2}2\sum_{i,j=1}^s b_ib_j\widetilde{f}^i\,^T\Omega_m(\xx)\widetilde{f}^j.
\end{align}
Moreover, equation (\ref{EFRKstages}) gives
$$\uuz=\frac{\YY_i}{\gamma_i}-\Delta t\sum_{j=1}^s\frac{a_{i,j}}{\gamma_i}\widetilde{f}^j,\qquad i=1,\ldots,s.$$
Substituting in the first sum in (\ref{step1}), yields
\begin{align*}
\widetilde{G}_m(\xx,\uun)\!=\!\widetilde{G}_m(\xx,\uuz)\!+\Delta t\sum_{i=1}^s\frac{b_i}{\gamma_i}{\YY_i^T}\Omega_m\widetilde{f}^i+\!\tfrac{\Delta t^2}2\!\!\sum_{i,j=1}^s\!\!\left(b_ib_j-b_i\frac{a_{i,j}}{\gamma_i}-b_j\frac{a_{j,i}}{\gamma_j}\right)\!{\widetilde{f}^i}\,^T\Omega_m\widetilde{f}^j\!.
\end{align*}
Therefore, taking into account (\ref{symplcond}) and (\ref{SDCLawproof}),
\begin{equation}\label{Dclaw}
D_{\Delta t}\widetilde{G}_m(\xx,\uuz)+D_{\Delta x}\left(\sum_{i=1}^s\frac{b_i}{\gamma_i}\widetilde{F}_m\left(\xx,t_0+c_i\Delta t,\YY_i,\widetilde{f}(\xx,t_0+c_i\Delta t , \YY_i )\right)\right)
\end{equation}
is a totally discrete approximation of (\ref{eq:SDCLgen}) satisfied by the solutions of (\ref{EFRKsol})--(\ref{EFRKstages}) applied to (\ref{SD}).

\end{enumerate}
\end{proof}

In literature several authors (see, e.g., \cite{VdV,CFMR,DIP,Simos}) have focused on EFRK methods (\ref{EFRKsol})--(\ref{EFRKstages}) with
\begin{equation}\label{gammaeq}
\gamma_i=\gamma,\qquad i=1,\ldots,s.
\end{equation}
For these particular schemes the following more general result holds true.

\begin{theorem}\label{THgen}
The EFRK method (\ref{EFRKsol})–-(\ref{EFRKstages}) with coefficients (\ref{gammaeq}) preserves (\ref{eq:SDCLgen}) with $\widetilde{G}$ quadratic in $\UU$ iff its coefficients satisfy (\ref{symplcond}).
\end{theorem}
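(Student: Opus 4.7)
The plan is to decompose the quadratic density $\widetilde{G}$ into a quadratic-homogeneous piece plus a linear piece (the constant part is irrelevant since $D_t$ kills it), apply the corresponding case of Theorem~\ref{THnew} to each, and combine the two resulting discrete conservation laws into a single one.

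Write $\widetilde{G}_m(\xx,\UU) = \tfrac{1}{2}\UU^T\Omega_m(\xx)\UU + \ww_m(\xx)^T\UU + c_m(\xx)$ with $\Omega_m$ symmetric. By Theorem~\ref{theo:euler} applied separately to pieces of different polynomial degree in $\UU$, the semidiscrete flux splits as $\widetilde{F}_m = \widetilde{F}_m^{(2)} + \widetilde{F}_m^{(1)}$ with $D_{\Delta x}\widetilde{F}_m^{(2)} = -\UU^T\Omega_m\widetilde{f}$ and $D_{\Delta x}\widetilde{F}_m^{(1)} = -\ww_m^T\widetilde{f}$; equivalently, the semidiscrete conservation law (\ref{eq:SDCLgen}) splits into one of quadratic-homogeneous type and one of linear type. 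Theorem~\ref{THnew}(2), under (\ref{symplcond}), then yields a totally discrete conservation law for the quadratic-homogeneous piece with flux weights $b_i/\gamma_i = b_i/\gamma$, while Theorem~\ref{THnew}(1) yields one for the linear piece with flux weights $b_i$ unconditionally. Summing the two produces
\[
D_{\Delta t}\widetilde{G}_m(\xx,\uuz) + D_{\Delta x}\!\left[\sum_{i=1}^s \tfrac{b_i}{\gamma}\widetilde{F}_m^{(2)}(\YY_i,\widetilde{f}^i) + \sum_{i=1}^s b_i\widetilde{F}_m^{(1)}(\YY_i,\widetilde{f}^i)\right] = 0,
\]
the required totally discrete approximation of (\ref{eq:SDCLgen}) on the EFRK solution, which establishes the `if' direction. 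The assumption $\gamma_i = \gamma$ enters by ensuring the quadratic-part weight $b_i/\gamma_i$ is a single well-defined multiple of $b_i$ across all stages, so the two flux families can be paired into a single discrete flux expression. For the `only if' direction, a quadratic-homogeneous density is a special case of the theorem's hypothesis, so preservation forces (\ref{symplcond}) by the `only if' part of Theorem~\ref{THnew}(2).

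The main technical obstacle is the flux-splitting step: showing that $\UU^T\Omega_m\widetilde{f}$ and $\ww_m^T\widetilde{f}$ are individually semidiscrete divergences rather than merely their sum. This is handled via Theorem~\ref{theo:euler}, since the characteristic multiplier of a quadratic-density conservation law is affine in $\UU$; the characteristic-form identity therefore decomposes into degree-$1$ and degree-$0$ pieces in $\UU$, and the semidiscrete Euler operator $\mathsf{E}$ respects this decomposition, so it annihilates each piece independently. Once the split is in hand, the rest of the argument is just the linearity of the space of conservation laws combined with the two cases of Theorem~\ref{THnew}.
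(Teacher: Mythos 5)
Your reduction to Theorem~\ref{THnew} hinges on the claim that the semidiscrete conservation law with density $\tfrac12\UU^T\Omega_m\UU+\ww_m^T\UU$ splits into two separate conservation laws, one with density $\tfrac12\UU^T\Omega_m\UU$ and one with density $\ww_m^T\UU$, i.e.\ that $\UU^T\Omega_m\widetilde{f}$ and $\ww_m^T\widetilde{f}$ are \emph{individually} exact $x$-differences. This is false in general, and the grading argument offered to support it does not go through: the operator $\widetilde{\mathcal{A}}=D_t\UU-\widetilde{f}(\xx,t,\UU)$ is not homogeneous in $\UU$ (the map $\widetilde{f}$ is an arbitrary nonlinearity), so multiplying $\widetilde{\mathcal{A}}$ by the degree-$1$ and degree-$0$ parts of an affine characteristic does not produce expressions that are separately annihilated by $\mathsf{E}$. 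A minimal counterexample already exists at the ODE level: for $\UU'=S(\UU+\ww)$ with $S$ antisymmetric, the quantity $\tfrac12\UU^T\UU+\ww^T\UU$ is conserved while neither $\tfrac12\UU^T\UU$ nor $\ww^T\UU$ is; a semidiscretization with the same structure defeats your splitting. A further warning sign is that your argument never genuinely uses the hypothesis (\ref{gammaeq}): summing the two discrete conservation laws delivered by Theorem~\ref{THnew} (with flux weights $b_i/\gamma_i$ and $b_i$ respectively) would yield a valid discrete divergence for arbitrary $\gamma_i$, so you would have proved a strictly stronger statement than Theorem~\ref{THgen} --- which is precisely the signal that the splitting cannot be available in general.

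The paper's proof avoids any splitting. It keeps the affine characteristic $\UU^T\Omega_m+\ww_m^T$ intact and instead rescales the density, tracking $\widetilde{G}_m(\xx,\gamma\uun)$ rather than $\widetilde{G}_m(\xx,\uun)$. Expanding this via (\ref{EFRKsol}) produces the combination $\left(\gamma\uuz^T\Omega_m+\ww_m^T\right)\widetilde{f}^i$, and the substitution $\gamma\uuz=\YY_i-\Delta t\sum_j a_{i,j}\widetilde{f}^j$ --- this is exactly where $\gamma_i=\gamma$ enters --- converts it to $\left(\YY_i^T\Omega_m+\ww_m^T\right)\widetilde{f}^i$, which equals $-D_{\Delta x}\widetilde{F}_m$ at the stage values by (\ref{DmF2}) applied as a whole; condition (\ref{symplcond}) then removes the remaining cross terms, giving the discrete conservation law (\ref{Dclawgen}) with density $\widetilde{G}_m(\xx,\gamma\uuz)$. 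If you wish to keep the spirit of your reduction, you would need to add the hypothesis that the quadratic-homogeneous and linear parts of $\widetilde{G}$ are separately conserved densities of (\ref{SD}), but that is a genuinely stronger assumption than the theorem makes.
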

\begin{proof}

The $m$-th entry of the density $\widetilde{G}(\xx,\UU)$ is of the form 
\begin{equation*}
\widetilde{G}_m(\xx,\UU)=\tfrac{1}2\UU^T\Omega_m(\xx)\UU+\ww_m^T(\xx)\UU,
\end{equation*}
where $\Omega_m$ and $\ww_m$ are defined as above. The semidiscrete conservation law (\ref{eq:SDCLgen}) at the point $x_m$ amounts to
\begin{equation}\label{DmF2}
D_{\Delta x}\widetilde F_m(\xx,t,\UU,\widetilde{f}(\xx,t,\UU))=-(\UU^T\Omega_m(\xx)+\ww^T_m(\xx))\widetilde{f}(\xx,t,\UU).
\end{equation}
Therefore, (\ref{EFRKsol}) yields
\begin{align}\nonumber
\widetilde{G}_m(\xx,\gamma\uun)=&\left(\tfrac{\gamma}2\uuz^T\Omega_m(\xx)+\ww_m^T(\xx)\right)\gamma\uuz+\gamma\Delta t\sum_{i=1}^sb_i(\gamma\uuz^T\Omega_m(\xx)+\ww_m^T(\xx))\widetilde{f}^i\\\nonumber
&+\tfrac{\gamma^2\Delta t^2}2\sum_{i,j=1}^s b_ib_j\widetilde{f}^i\,^T\Omega_m(\xx)\widetilde{f}^j.
\end{align}
Substituting
$$\gamma\uuz={\YY_i}-\Delta t\sum_{j=1}^s{a_{i,j}}\widetilde{f}^j,\qquad i=1,\ldots,s,$$
in the first sum, yields
\begin{align*}
\widetilde{G}_m(\xx,\gamma\uun)&\,=\widetilde{G}_m(\xx,\gamma\uuz)+\gamma\Delta t\sum_{i=1}^sb_i\left({\YY_i^T}\Omega_m(\xx)+\ww_m(\xx)^T\right)\widetilde{f}^i\\
&+\tfrac{\gamma^2\Delta t^2}2\sum_{i,j=1}^s\left(b_ib_j-b_i\frac{a_{i,j}}{\gamma}-b_j\frac{a_{j,i}}{\gamma}\right){\widetilde{f}^i}\,^T\Omega_m(\xx)\widetilde{f}^j.
\end{align*}
Therefore, taking into account (\ref{DmF2}) and (\ref{symplcond}), the method preserves the conservation law
\begin{equation}\label{Dclawgen}
D_{\Delta t}\widetilde{G}_m(\xx,\gamma\uuz)+D_{\Delta x}\left(\gamma\sum_{i=1}^sb_i\widetilde{F}_m\left(\xx,t_0+c_i\Delta t,{\YY_i},\widetilde{f}(\xx,t_0+c_i\Delta t , \YY_i )\right)\right)=0.
\end{equation}
\end{proof}
\subsection*{One-step methods}
We focus now on the simple case of one-stage EFRK methods, obtained by requiring the exactness for solutions in the fitting space generated by
$$\mathcal{F}=\langle\exp{(\pm\mathrm{i}\omega t)}\rangle=\langle\cos{(\omega t)},\sin{(\omega t)}\rangle.$$
The coefficients of the method are obtained by solving
\begin{align}\label{eq:sysmid}
\mathcal{L}[\cos(\omega t),\Delta t]=\mathcal{L}[\sin(\omega t),\Delta t]=\mathcal{L}_1[\cos(\omega t),\Delta t]=\mathcal{L}_1[\sin(\omega t),\Delta t]=0,
\end{align}
where the functionals $\mathcal{L}$ and $\mathcal{L}_1$ are defined as in (\ref{eq:Lsol}) and (\ref{eq:Lstages}), respectively, with $s=1$.

The solution of (\ref{eq:sysmid}) in dependence of $c_1=c_1(\omega, \Delta t)$ and $\nu=\omega \Delta t$, is \cite{CFMR}
$$\gamma_1=\frac{1}{\cos(c_1\nu )},\qquad a_{1,1}=\frac{\tan(c_1\nu )}{\nu},\qquad b_1=\frac{\sin{\nu}}{\nu\cos( c_1\nu)}.$$
Moreover, condition (\ref{symplcond}) amounts to
$$b_1=2\gamma_1^{-1}a_{1,1},$$
and yields
$$2\sin(c_1\nu)\cos(c_1\nu)=\sin \nu,$$
that is satisfied iff $c_1=\tfrac{1}2$. Therefore, there is a unique one-stage EFRK method that preserves discrete versions of conservation laws with quadratic density. This is the exponentially fitted midpoint (EF midpoint) method and amounts to
\begin{align*}
\uun=&\,\uuz+\Delta t b_1 \widetilde f(\xx,t_0+\tfrac{1}2\Delta t, \YY_1),\\
\YY_1=&\,\gamma_1\uuz+\Delta ta_{1,1}\widetilde{f}(\xx,t_0+\tfrac{1}2\Delta t, \YY_1).
\end{align*}
Considering that $$\frac{a_{1,1}}{b_1}=\frac{\gamma_1}{2}\qquad \Longrightarrow\qquad  \YY_1=\frac{\gamma_1}{2}(\uun+\uuz),$$
the method can be equivalently recast in the more compact form,
\begin{equation}\label{EFMP}
\uun=\uuz+\frac{\Delta t\sin{\nu}}{\nu \cos(\nu/2)}\widetilde{f}\left(\xx,t_0+\frac{\Delta t}2,\frac{\uun+\uuz}{2\cos(\nu/2)}\right),
\end{equation}
that converges to the classic midpoint method when $\nu\rightarrow 0$.

As the method is one-step, then Theorem \ref{THgen} applies. Hence, if $\widetilde{G}$ is a quadratic density, the discrete conservation law (\ref{Dclawgen}) satisfied by the solutions of (\ref{EFMP}) at the node $x_m$ amounts to
\begin{align}\label{MPclaw}
&D_{\Delta t}\widetilde{G}_m\!\left(\!\xx,\frac{\uuz}{\cos(\nu/2)}\right)\!+\!D_{\Delta x}\!\left(\frac{\sin{\nu}}{\nu \cos^2(\nu/2) }\widetilde{F}_m\!\left(\!\xx,t_0\!+\!\frac{\Delta t}2,\frac{\uun+\uuz}{2\cos(\nu/2)},\frac{\nu \cos(\nu/2)}{\sin{\nu}}D_{\Delta t}\uuz\right)\!\right)\!=0.
\end{align}
Note that in the particular cases when $\widetilde{G}$ is linear or quadratic homogeneous (\ref{MPclaw}) reduces to (\ref{LinCLaw}) or (\ref{Dclaw}), respectively, with $s=1$.

\section{Conservative exponentially fitted schemes}\label{sec:methods}
In this section we consider three different PDEs: the linear advection equation, the modified KdV (mKdV) equation, and the nonlinear Schr\"odinger (NLS) equation. Each of these equations has two conservation laws whose density $G$ is either linear or quadratic. They govern the local variation of mass and charge or momentum, respectively. 

For each equation we introduce here an exponentially fitted method that preserves the selected conservation laws locally. The conservative semidiscretizations are found following the strategy described in Section \ref{sec:space}. Fully discrete second order schemes are then obtained by applying the EF midpoint method (\ref{EFMP}) in time.

Henceforth we denote with $\mu_{\Delta x}$ the forward average operator in space whose action on a function $f$ of the semidiscrete or discrete solution is defined as
$$\mu_{\Delta x}:f({U}_m)\rightarrow \frac{f({U}_{m+1})+f({U}_{m})}2,\qquad \mu_{\Delta x}:f({u}_{m,n})\rightarrow \frac{f({u}_{m+1,n})+f({u}_{m,n})}2,$$
respectively. The discrete forward average operator in time is analogously defined as
$$\mu_{\Delta t}:f({u}_{m,n})\rightarrow \frac{f({u}_{m,n+1})+f({u}_{m,n})}2.$$
\subsubsection*{Linear advection equation}
The linear advection equation, 
\begin{equation}\label{LAE}
u_t=\omega u_x,
\end{equation}
is itself a conservation law, with density, flux and characteristic
$$F_1=-\omega u,\qquad G_1=u\qquad \mathcal{Q}_1=1$$
Solutions of (\ref{LAE}) satisfy the momentum conservation law defined by
$$F_2=-\frac{\omega}2 {u^2},\qquad G_2=\frac{1}2u^2\qquad \mathcal{Q}_2=u.$$
Centred at a generic point $x_m$, the semidiscretization
\begin{equation}\label{SDLAE}
D_t {U}_m=\omega D_{\Delta x}\mu_{\Delta x} {U}_{m-1},
\end{equation}
has a semidiscrete mass conservation law defined by 
$$\widetilde F_1=-\omega\mu_{\Delta x} {U}_{m-1},\qquad \widetilde G_1=U_m,\qquad \widetilde{\mathcal{Q}}_1=1,$$
and a momentum conservation law with flux, density and characteristic
$$\widetilde F_2=-\frac{\omega}2  {U}_{m}{U}_{m-1},\qquad \widetilde G_2=\frac{1}2{U}_m^2,\qquad \widetilde{\mathcal{Q}}_1=U_m,$$
respectively. Applying the EF midpoint method (\ref{EFMP}) to (\ref{SDLAE}) gives a fully discrete scheme. The discrete conservation laws are given by equation (\ref{MPclaw}) and amount to
\begin{align}\label{CLLAE1}
&D_{\Delta t}{u}_{m,0}+D_{\Delta x}\left(-\frac{\omega\sin \nu}{\nu \cos^2(\nu/2)}\mu_{\Delta x} (\mu_{\Delta t} u_{m-1,0})\right)=0,\\\label{CLLAE2}
&D_{\Delta t} \left(\frac{1}2u_{m,0}^2\right)+D_{\Delta x}\left(-\frac{\omega\sin \nu}{2\nu \cos^2(\nu/2)}(\mu_{\Delta t} {u}_{m-1,0})(\mu_{\Delta t} {u}_{m,0})\right)=0.
\end{align}
\subsubsection*{Modified Korteweg-de Vries equation}
The mKdV equation,
\begin{equation}\label{mKdV}
u_t+6u^2u_x+u_{xxx}=0,
\end{equation}
 has infinitely many conservation laws. Among them the ones of the mass and momentum are defined by
$$F_1=2u^3+u_{xx},\qquad G_1=u,\qquad \mathcal{Q}_1=1,$$
and
$$F_2=\tfrac{3}2u^4+uu_{xx}-\tfrac{1}2u_x^2,\qquad G_2=\tfrac{1}2u^2, \qquad \mathcal{Q}_2=u,$$
respectively. Fully discrete schemes for the mKdV equation that preserve these conservation laws have been introduced in \cite{mKdV}. Considering a generic point $x_m$, the semidiscretization
\begin{equation}\label{SDmKdV}
D_t {U}_m+D_{\Delta x}\left(2(\mu_{\Delta x}U_{m-1})\mu_{\Delta x}(U_{m-1}^2)+D_{\Delta x}^2\mu_{\Delta x}U_{m-2}\right)=0,
\end{equation}
has mass and momentum conservation laws defined by
\begin{align*}
\widetilde{F}_1=&\,2(\mu_{\Delta x}U_{m-1})\mu_{\Delta x}(U_{m-1}^2)+D_{\Delta x}^2\mu_{\Delta x}U_{m-2},\qquad \widetilde{G}_1=U_m,\qquad \widetilde{\mathcal Q}_1=1,\\
\widetilde{F}_2=&\,\tfrac{1}{2}U_{m-1}U_m(U_{m-1}^2\!+\!U_m^2\!+\!U_{m-1}U_m)\!+\!(\mu_{\Delta x}U_{m-1})D_{\Delta x}^2\mu_{\Delta x}U_{m-2}\\
&-\tfrac{1}4(D_{\Delta x}U_{m-1})D_{\Delta x}(U_{m-2}\!+\!U_m), \qquad\widetilde{G}_2=\tfrac{1}2U_m^2,\qquad \widetilde{\mathcal{Q}}_2=U_m.
\end{align*}
The fully discrete scheme obtained by applying the EF midpoint method (\ref{EFMP}) to (\ref{SDmKdV}) has totally discrete conservation laws given by (see (\ref{MPclaw})),
\begin{align}\label{eq:mKdVcl1}
&D_{\Delta t} u_{m,0}+D_{\Delta x}\left(\frac{\sin \nu}{\nu\cos(\nu/2)}\widetilde{F}_1\vert_{U_m={(\mu_{\Delta t} u_{m,0})}/{\cos(\nu/2)}}\right),\\\label{eq:mKdVcl2}
&D_{\Delta t} u_{m,0}^2+D_{\Delta x}\left(\frac{\sin \nu}{\nu}\widetilde{F}_2\vert_{U_m={(\mu_{\Delta t} u_{m,0})}/{\cos(\nu/2)}}\right)
\end{align}
\subsubsection*{The nonlinear Schr\"odinger equation}
We finally consider the NLS equation for a complex solution, $\psi(x,t)\in\mathbb{C}$,
$$i\psi_t+\psi_{xx}+|\psi|^2\psi=0.$$
Setting $\psi=u+iv$, with $u,v\in\mathbb{R}$, the NLS equation can be equivalently written as the following system of PDEs for its real and imaginary part:
\begin{align}\label{NLS1}
\mathcal{A}:=\left(u_t+v_{xx}+(u^2+v^2)v,
-v_t+u_{xx}+(u^2+v^2)u\right)={0}.
\end{align}
Also the NLS equation has infinitely many conservation laws. Among them, the ones defined by
\begin{align*}
F_1=2uv_x-2u_xv,\qquad G_1=u^2+v^2,\qquad \mathcal{Q}_1=(2u,-2v)^T,
\end{align*}
and by
\begin{align*}
F_2=u_x^2+v_x^2+u_tv-uv_t+\tfrac{1}2(u^2+v^2)^2,\qquad G_2=uv_x-u_xv, \qquad \mathcal{Q}_2=(2v_x,2u_x)^T,
\end{align*}
represent the conservation laws of charge and momentum, respectively. Although in Section \ref{sec:space} we have only considered a single PDE, the strategy can be straightforwardly extended to deal with systems of PDEs (see \cite{FoCM}). A wide range of numerical methods for the NLS equation that preserve both the charge and momentum conservation laws has been derived in \cite{NLS}. The semidiscretization,
\begin{align}\label{SDNLS1}
D_t U_m+D_{\Delta x}^2V_{m-1}+\frac{1}2(U_0(U_{-1}+U_1)+V_0(V_{-1}+V_1))V_0=&\,0,\\\label{SDNLS2}
-D_t V_m+D_{\Delta x}^2U_{m-1}+\frac{1}2(U_0(U_{-1}+U_1)+V_0(V_{-1}+V_1))U_0=&\,0.
\end{align}
has conservation laws of charge and momentum defined by
\begin{align*}
\widetilde{F}_1=\,&2(\mu_{\Delta x}U_{m-1})D_{\Delta x}V_{m-1}\!-\!2(D_{\Delta x}U_{m-1})\mu_{\Delta x}V_{m-1},\quad \widetilde{G}_1\!=\!U_m^2\!+\!V_m^2,\quad \widetilde{\mathcal Q}_1\!=(2U_m,\!-2V_m)^T\!\!,\\ 
\widetilde{F}_2=\,&(D_{\Delta x}U_{m-1})^2+(D_{\Delta x}V_{m-1})^2+(\mu_{\Delta x}V_{m-1})(D_t\mu_{\Delta x} U_{m-1})-(\mu_{\Delta x}U_{m-1})(D_t\mu_{\Delta x} V_{m-1})\\
&+\tfrac{1}2(U_mU_{m-1}+V_mV_{m-1})^2-\!\tfrac{\Delta x^2}4\{(D_tD_{\Delta x}U_{m-1})D_{\Delta x}V_{m-1}\!-\!(D_tD_{\Delta x}V_{m-1})D_{\Delta x}U_{m-1}\}, \\
\widetilde{G}_2=\,&U_mD_{\Delta x}\mu_{\Delta x}V_{m-1}\!-\!V_mD_{\Delta x}\mu_{\Delta x}U_{m-1},\qquad \widetilde{\mathcal{Q}}_2=(2D_{\Delta x}\mu_{\Delta x}V_{m-1},2D_{\Delta x}\mu_{\Delta x}U_{m-1})^T,
\end{align*}
respectively. The conservation laws preserved by the EF midpoint, obtained from (\ref{MPclaw}), are:
\begin{align}\label{eq:NLScl1}
&D_{\Delta t} (u_{m,0}^2+v_{m,0}^2)+D_{\Delta x}\left(\frac{\sin \nu}{\nu}\widetilde{F}_1\right),\\\label{eq:NLScl2}
&D_{\Delta t} (u_{m,0}D_{\Delta x}\mu_{\Delta x}v_{m-1,0}-v_{m,0}D_{\Delta x}\mu_{\Delta x}u_{m-1,0})+D_{\Delta x}\left(\frac{\sin \nu}{\nu}\widetilde{F}_2\right),
\end{align}
where $\widetilde{F}_1$ and $\widetilde{F}_2$ are calculated at $U_m={(\mu_{\Delta t} u_{m,0})}/{\cos(\nu/2)},$ and $V_m={(\mu_{\Delta t} v_{m,0})}/{\cos(\nu/2)}.$
\section{Numerical Tests}\label{sec:numerics}
In this section we propose some numerical tests that highlight the advantages of exponentially fitted methods over classic integrators when the solution of the problem is oscillatory, together with their conservative properties proved in this manuscript. As in \cite{IMA} we evaluate the error on the $q$-th conservation law either as
\begin{equation}\label{CLerr1}
{\rm Err}_q=\Delta x\max_{n}\left|\sum_m \left(\widetilde{G}_q\big\vert_{(x_m,t_n)}-\widetilde{G}_q\big\vert_{(x_m,t_0)}\right)\right|,\qquad q=1,2,
\end{equation}
if the boundary conditions are periodic or zero, or as
\begin{equation}\label{CLerr2}
{\rm Err}_q=\max_{m,n}\left(\{D_{\Delta x}\widetilde{F}_q+D_{\Delta t}\widetilde{G}_q\}\big\vert_{(x_m,t_n)}\right),\qquad q=1,2,
\end{equation}
otherwise. Note that (\ref{CLerr1}) is an estimate of the error in the conservation of the invariant (\ref{inv}). The error in the solution is calculated as
$${\rm Sol\,\, err}=\frac{\|\uu_N-u_{\rm exact}(\xx,T)\|}{\|u_{\rm exact}(\xx,T)\|}$$
for equations (\ref{LAE}) and (\ref{mKdV}), and as
$${\rm Sol\,\, err}=\sqrt{\frac{\|\uu_N-u_{\rm exact}(\xx,T)\|^2+\|\mathbf{v}_N-v_{\rm exact}(\xx,T)\|}{\|u_{\rm exact}(\xx,T)\|^2+\|v_{\rm exact}(\xx,T)\|^2}}$$
for system (\ref{NLS1}), where $N$ is the last time step. For small $\Delta x$, the order of accuracy of the time integrator is estimated as
$${\rm Order}=\frac{\log({\rm Sol\,\, err}_{\Delta t_1}/{\rm Sol\,\, err}_{\Delta t_2})}{\log(\Delta t_1/\Delta t_2)},$$
where the error in the solution is evaluated as above using two different stepsizes, $\Delta t_1$ and $\Delta t_2$.
\subsubsection*{Linear advection equation}
The solution of the linear advection equation (\ref{LAE}) with initial condition $u(x,0)=f(x)$ and $x\in\mathbb{R}$ is given by $u_{\text{exact}}(x,t)=f(x+\omega t)$. We consider here its restriction to $(x,t)\in [-1,1]\times[0,1]$, assigning Dirichlet boundary conditions given by the values of $f(x+\omega t)$ at the endpoints of the spatial interval. 

As a first numerical test we set $\omega=5$ and the initial condition $u(x,0)=\sin(x)$. The exact solution, $u_{\text{exact}}(x,t)=\sin(x+5t)$, oscillates in time with frequency $\omega=5$. We solve this problem with $\Delta x=0.001 $ and $\Delta t=0.1/2^n, \,\, n=0,\ldots,11$. 

We compare here the two schemes obtained applying either the classic midpoint rule or the EF midpoint method (\ref{EFMP}) to the semidiscretization (\ref{SDLAE}). The discrete conservation laws preserved by the classic scheme are the limit for $\nu\rightarrow 0$ of (\ref{CLLAE1}) and (\ref{CLLAE2}). 

In Table~\ref{tab:LAEsin} we show the errors in the conservation laws evaluated as in (\ref{CLerr2}), the solution error, and the order of accuracy of the two schemes. The obtained results show that both methods exactly preserve the local conservation laws, and the corresponding errors are only due to accumulation of the round-offs.

The EF midpoint exactly integrates the solution of this problem, and so the error in the solution is entirely due to the space discretization for any value of $\Delta t$. By contrast, the classic midpoint method converges as a second-order method and the error in time is negligible only when $\Delta t \lesssim 10^{-4}$. This can be further seen in the graph on the left of Figure~\ref{a1}, showing a logarithmic plot of the solution errors for different values of $\Delta t$.
\begin{table}[t]
\caption{Errors in solution and conservation laws for $u(x,0)=\sin(x)$}\label{tab:LAEsin}
\small
\begingroup
\setlength{\tabcolsep}{6pt} 
\renewcommand{\arraystretch}{1.12} 
\centerline{\begin{tabular}{|c||c|c|c|c||c|c|c|c|}
\hline
&  \multicolumn{3}{r}{Classic Midpoint} && \multicolumn{3}{r}{EF Midpoint}&\\ 
\hline
$n$ & Sol err & Order & Err$_1$ & Err$_2$ & Sol err & Order & Err$_1$ & Err$_2$\\
\hline
0& 4.45e-2 &  & 8.94e-13 & 1.73e-12 & 3.43e-7 &  & 8.53e-13 & 1.58e-12\\
1& 1.10e-2 & 2.01 & 9.38e-13 & 1.86e-12 & 3.49e-7 & *** & 9.07e-13 & 1.80e-12\\
2& 2.74e-3 & 2.01 & 1.13e-12 & 2.26e-12 & 3.50e-7 & *** & 9.40e-13 & 1.84e-12\\
3& 6.85e-4 & 2.00 & 1.11e-12 & 2.22e-12 & 3.50e-7 & *** & 9.94e-13 & 1.98e-12\\
4& 1.71e-4 & 2.00 & 9.67e-13 & 1.90e-12 & 3.50e-7 & *** & 9.47e-13 & 1.84e-12\\
5& 4.31e-5 & 1.99 & 9.72e-13 & 1.96e-12 & 3.50e-7 & *** & 1.01e-12 & 2.00e-12\\
6& 1.10e-5 & 1.97 & 9.39e-13 & 1.83e-12 & 3.50e-7 & *** & 1.00e-12 & 1.88e-12\\
7& 3.02e-6 & 1.87 & 1.05e-12 & 2.11e-12 & 3.50e-7 & *** & 1.14e-12 & 2.13e-12\\
8& 1.02e-6 & *** & 1.53e-12 & 2.96e-12 & 3.50e-7 & *** & 1.48e-12 & 3.06e-12\\
9& 5.17e-7 & *** & 2.47e-12 & 5.22e-12 & 3.50e-7 & *** & 2.63e-12 & 5.46e-12\\
10& 3.92e-7 & *** & 5.10e-12 & 1.08e-11 & 3.50e-7 & *** & 4.92e-12 & 1.03e-11\\
11& 3.61e-7 & *** & 9.09e-12 & 1.95e-11 & 3.50e-7 & *** & 1.00e-11 & 2.21e-11\\
\hline
\end{tabular}}
\endgroup
\end{table}
\begin{figure}[htbp]
\begin{center}
	\begin{tikzpicture}

\begin{axis}[%
width=2.221in,
height=1.566in,
at={(0.in,0.481in)},
scale only axis,
xmode=log,
xmin=4e-05,
xmax=0.2,
xminorticks=true,
ymode=log,
ymin=1e-07,
ymax=0.2,
yminorticks=true,
title={Sol\,\,err$(\Delta t)$},
axis background/.style={fill=white},
legend style={legend cell align=left, align=left, draw=white!15!black,legend pos=north west}
]
\addplot [color=colorclassyblue, mark=square*, mark options={solid, colorclassyblue},mark size=2pt]
  table[row sep=crcr]{%
0.1	0.044490250965211\\
0.05	0.01102980420543\\
0.025	0.002742403409366\\
0.0125	0.000684800686336337\\
0.00625	0.000171394573041465\\
0.003125	4.31070587528995e-05\\
0.0015625	1.10391859789169e-05\\
0.00078125	3.02246877416683e-06\\
0.000390625	1.01830516573711e-06\\
0.0001953125	5.17265229916627e-07\\
9.765625e-05	3.92005293129475e-07\\
4.8828125e-05	3.60690371615639e-07\\
};
\addlegendentry{Classic}

\addplot [color=colorclassyorange, mark=*, mark options={solid, colorclassyorange},mark size=2pt]
  table[row sep=crcr]{%
0.1	3.4325884867272e-07\\
0.05	3.49255504586159e-07\\
0.025	3.50022100710592e-07\\
0.0125	3.50196320861707e-07\\
0.00625	3.50238024295567e-07\\
0.003125	3.50248535582029e-07\\
0.0015625	3.50251147413801e-07\\
0.00078125	3.50251805319214e-07\\
0.000390625	3.50251969604143e-07\\
0.0001953125	3.50252015062198e-07\\
9.765625e-05	3.50252034631768e-07\\
4.8828125e-05	3.50252081635567e-07\\
};
\addlegendentry{EF}

\addplot [color=black, dashed, line width=2]
  table[row sep=crcr]{%
0.1     0.015\\
0.05	0.00375\\
0.025	9.375e-4\\
0.0125	2.34375e-4\\
0.00625	5.859375e-05\\
0.003125	1.46484375e-05\\
0.0015625	3.662109375e-06\\
0.00078125	0.91552734375e-06\\
};
\addlegendentry{slope 2}

\end{axis}

\begin{axis}[%
width=2.221in,
height=1.566in,
at={(2.958in,0.481in)},
scale only axis,
xmode=log,
xmin=9.765625e-06,
xmax=0.01,
xminorticks=true,
ymode=log,
ymin=1e-07,
ymax=0.1,
yminorticks=true,
title={Sol\,\,err$(\Delta t)$},
axis background/.style={fill=white},
legend style={legend cell align=left, align=left, draw=white!15!black,legend pos=north west}
]
\addplot [color=colorclassyblue, mark=square*, mark options={solid, colorclassyblue},mark size=2pt]
  table[row sep=crcr]{%
0.005	0.0114\\
0.0025	0.00315\\
0.00125	0.00079\\
0.000625	0.000198\\
0.0003125	4.98e-05\\
0.00015625	1.27e-05\\
7.8125e-05	3.49e-06\\
3.90625e-05	1.18e-06\\
1.953125e-05	5.97e-07\\
9.765625e-06	4.52e-07\\
};
\addlegendentry{Classic}

\addplot [color=colorclassyorange, mark=*, mark options={solid, colorclassyorange},mark size=2pt]
  table[row sep=crcr]{%
0.005	0.000716\\
0.0025	0.000177\\
0.00125	4.41e-05\\
0.000625	1.11e-05\\
0.0003125	2.95e-06\\
0.00015625	9.43e-07\\
7.8125e-05	5.07e-07\\
3.90625e-05	4.26e-07\\
1.953125e-05	4.09e-07\\
9.765625e-06	4.05e-07\\
};
\addlegendentry{EF}

\addplot [color=black, dashed, line width=2]
  table[row sep=crcr]{%
0.005	0.0025\\
0.0025	0.000625\\
0.00125	0.00015625\\
0.000625	3.90625e-05\\
0.0003125	9.765625e-06\\
0.00015625	2.44140625e-06\\
};
\addlegendentry{slope 2}

\end{axis}

\end{tikzpicture}%
\end{center}
	\caption{Solution error for $u(x,0)=\sin(x)$ (left) and $u(x,0)=\log(x)\sin(x)$ (right) (logarithmic scale on both axis)}
	\label{a1}
\end{figure}
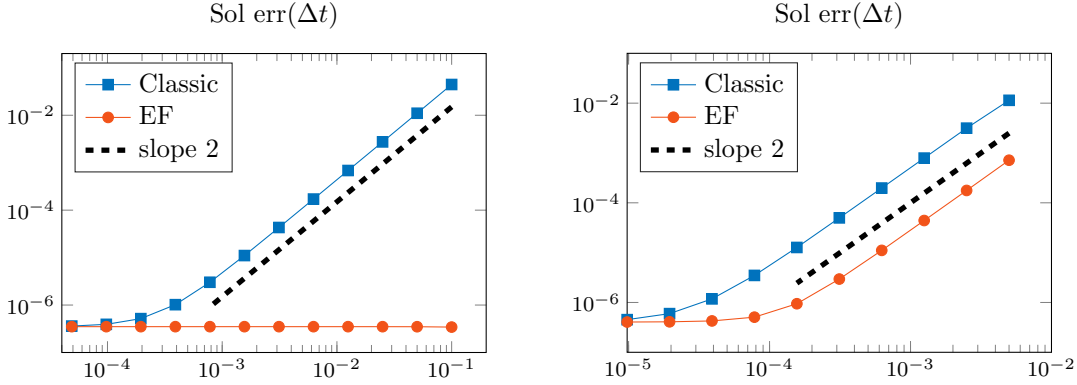

For a second numerical test, we take $\omega=50$ and the initial condition $u(x,0)=\log(x)\sin(x)$. The exact solution, $u_{\rm exact}=\log(x+50t)\sin(x+50t),$ is highly oscillatory in time, but is not exactly integrated by EF midpoint. We choose $\Delta x=0.005$ and $\Delta t=0.005/2^n,\,\,n=0,\ldots,9$. The graph on the right of Figure~\ref{a1} shows that both methods converge with order two, until the error in time is larger than the error in space. However, the EF midpoint is more accurate than the classic midpoint and it reaches the maximum accuracy with $\Delta t\simeq 10^{-4}$. In contrast, classic midpoint is equally accurate only when $\Delta t\simeq 10^{-5}$ or smaller. The results in Table~\ref{tab:LAElog} reflect the conservative properties of the schemes, and show that EF midpoint is up to 18 times more accurate than classic midpoint for the largest values of $\Delta t$.

\begin{table}[t]
\caption{Errors in solution and conservation laws for $u(x,0)=\log(x)\sin(x)$;}\label{tab:LAElog}
\small
\begingroup
\setlength{\tabcolsep}{6pt} 
\renewcommand{\arraystretch}{1.12} 
\centerline{\begin{tabular}{|c||c|c|c|c||c|c|c|c|}
\hline
&  \multicolumn{3}{r}{Classic Midpoint} && \multicolumn{3}{r}{EF Midpoint}&\\ 
\hline
$n$ & Sol err & Order & Err$_1$ & Err$_2$ & Sol err & Order & Err$_1$ & Err$_2$\\
\hline
0& 1.14e-2 &  & 3.99e-11 & 3.16e-10 & 7.16e-4 &  & 3.59e-11 & 2.81e-10\\
1& 3.15e-3 & 1.85 & 4.15e-11 & 3.33e-10 & 1.77e-4 & 2.02 & 3.62e-11 & 2.84e-10\\
2& 7.90e-4 & 1.99 & 4.72e-11 & 3.80e-10 & 4.41e-5 & 2.01 & 4.37e-11 & 3.50e-10\\
3& 1.98e-4 & 2.00 & 4.38e-11 & 3.52e-10 & 1.11e-5 & 1.98 & 4.87e-11 & 3.86e-10\\
4& 4.98e-5 & 1.99 & 4.72e-11 & 3.79e-10 & 2.95e-6 & 1.92 & 4.60e-11 & 3.73e-10\\
5& 1.27e-5 & 1.97 & 4.70e-11 & 3.93e-10 & 9.43e-7 & *** & 4.94e-11 & 3.92e-10\\
6& 3.49e-6 & 1.87 & 5.47e-11 & 4.52e-10 & 5.07e-7 & *** & 6.12e-11 & 4.96e-10\\
7& 1.18e-6 & *** & 6.69e-11 & 5.64e-10 & 4.26e-7 & *** & 7.00e-11 & 5.82e-10\\
8& 5.97e-7 & *** & 1.05e-10 & 9.09e-10 & 4.09e-7 & *** & 1.01e-10 & 8.85e-10\\
9 & 4.52e-7 & *** & 1.86e-10 & 1.77e-9 & 4.05e-7 & *** & 1.95e-10 & 1.68e-9\\
\hline
\end{tabular}}
\endgroup
\end{table}
\subsubsection*{Modified Korteweg-de Vries equation}
The mKdV equation (\ref{mKdV}) has a breather solution given by \cite{CJ} 
\begin{equation}\label{exbr}
u(x,t)=-4\frac{\eta}{\xi}\frac{\xi\cosh(\nu_2+\rho_2)\sin(\nu_1+\rho_1)+\eta\sinh(\nu_2+\rho_2)\cos(\nu_1+\rho_1)}{\cosh^2(\nu_2+\rho_2)+(\eta/\xi)^2\cos^2(\nu_1+\rho_1)},
\end{equation}
with $\xi\in\mathbb{R},$ $\eta>0,$
\begin{equation*}
\nu_1=2\xi(x+4(\xi^2-3\eta^2)t), \qquad\nu_2=2\eta(x-4(\eta^2-3\xi^2)t),
\end{equation*}
and
\begin{equation*}
\tan{\rho_1}=\frac{B\xi-A\eta}{A\xi+B\eta},\qquad \mathrm{e}^{-\rho_2}=\left|\frac{\xi}{2\eta}\right|\sqrt{\frac{A^2+B^2}{\xi^2+\eta^2}}.
\end{equation*}
We solve here the mKdV equation (\ref{mKdV}) with initial condition obtained evaluating (\ref{exbr}) at $t=0$, zero boundary conditions, and setting
$$A=3, \qquad B=\sqrt{48\xi^2-9},\qquad \eta=\xi\sqrt{3},$$
so that
$$\nu_2=2\sqrt{3}\xi x, \qquad \rho_2=0,$$
and the wave does not travel, but it only oscillates around its initial position. The frequency of oscillation can be calculated from the initial condition and it is
$$\omega=64\xi^3.$$ 

We first choose $\xi=0.7,$ so the corresponding solution oscillates with frequency $\omega=21.952.$
Figure~\ref{fig:exbr} shows the exact profile of this breather solution for $(x,t)\in[-4,4]\times[0,20]$ from three different perspectives. In particular, the view on the $x$--$t$ plane at the centre of Figure~\ref{fig:exbr} shows a high number of oscillations in the considered time window. On the right of Figure~\ref{fig:exbr}, the view on the $x$--$u$ plane highlights that the wave only moves within a compact space support, roughly the interval $ [-3,3]$, and that the superposition of all the oscillations defines a profile that is symmetric with respect to the plane $x=0$.
\begin{figure}[tbp]
\begin{center}
\includegraphics[width=.32\textwidth,height=5cm]{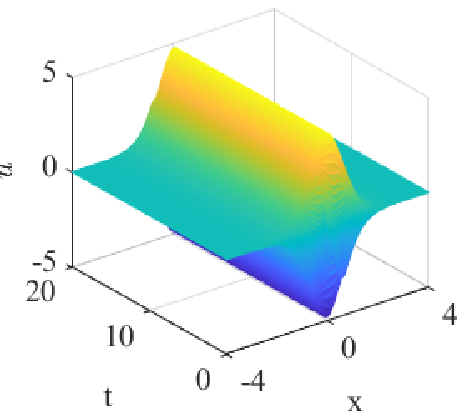}
\includegraphics[width=.32\textwidth,height=5cm]{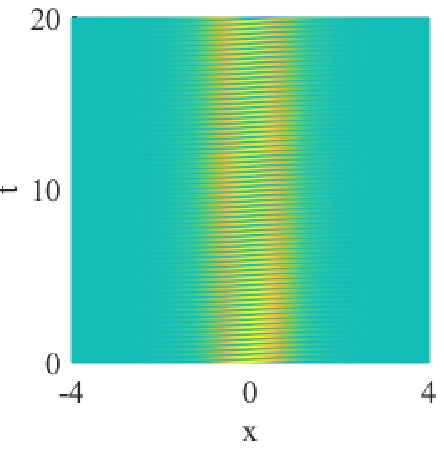}
\includegraphics[width=.32\textwidth,height=5cm]{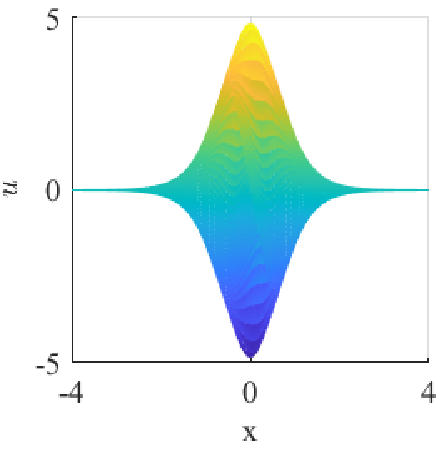}
\end{center}
	\caption{Breather solution of mKdV: 3D view (left), $x$--$t$ view (centre), $x$--$u$ view (right)}
	\label{fig:exbr}
\end{figure}

In this first numerical test we compare the solutions of the classic midpoint and of the EF midpoint applied to the semidiscretization (\ref{SDmKdV}) setting $\Delta x=0.04$ and $\Delta t=0.004$. 
\begin{figure}[tbp]
\begin{center}
\includegraphics[width=.32\textwidth,height=5cm]{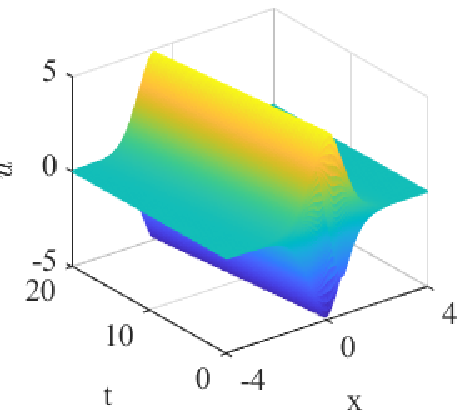}
\includegraphics[width=.32\textwidth,height=5cm]{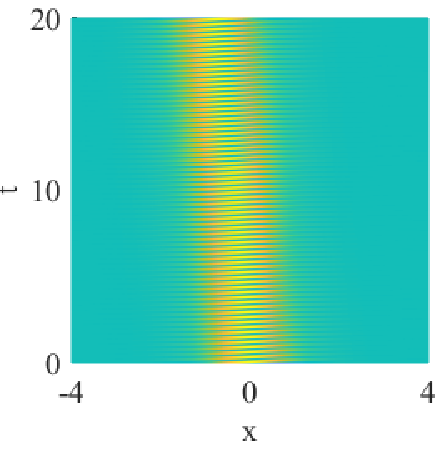}
\includegraphics[width=.32\textwidth,height=5cm]{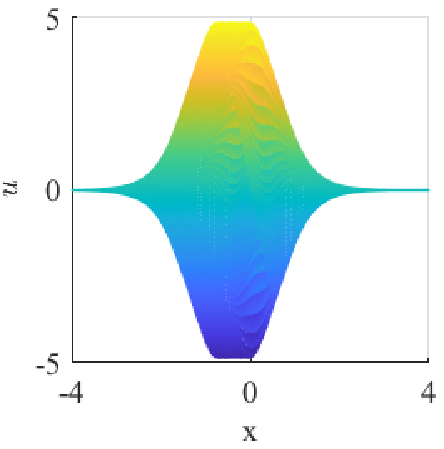}
\end{center}
	\caption{Solution of classic midpoint: 3D view (left), $x$--$t$ view (centre), $x$--$u$ view (right)}
	\label{fig:classicbr}
\end{figure} 
Figure~\ref{fig:EFbr} shows that the solution of classic midpoint travels towards negative values of $x$. Considering longer time windows, the wave reaches the left boundary and escapes out of the considered space domain. This solution is qualitatively incorrect.
\begin{figure}[tbp]
\begin{center}
\includegraphics[width=.32\textwidth,height=5cm]{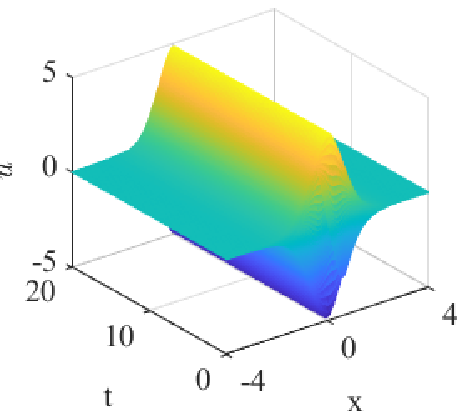}
\includegraphics[width=.32\textwidth,height=5cm]{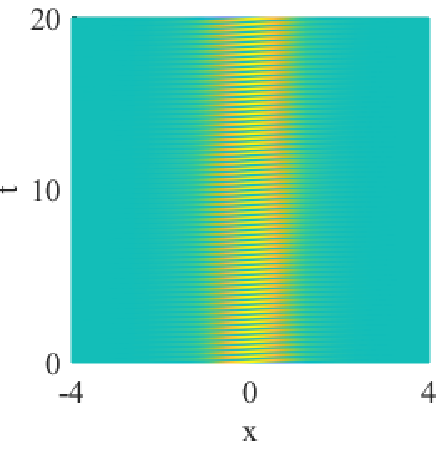}
\includegraphics[width=.32\textwidth,height=5cm]{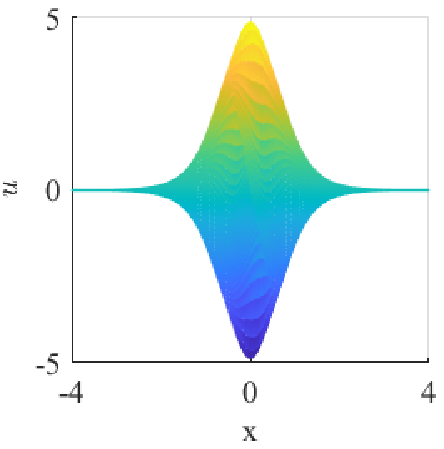}
\end{center}
	\caption{Solution of EF midpoint: 3D view (left), $x$--$t$ view (centre), $x$--$u$ view (right)}
	\label{fig:EFbr}
\end{figure} 
\begin{figure}[tbp]
\begin{center}
\includegraphics[width=.32\textwidth,height=5cm]{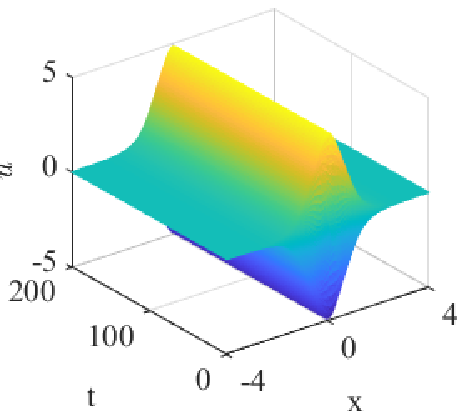}
\includegraphics[width=.32\textwidth,height=5cm]{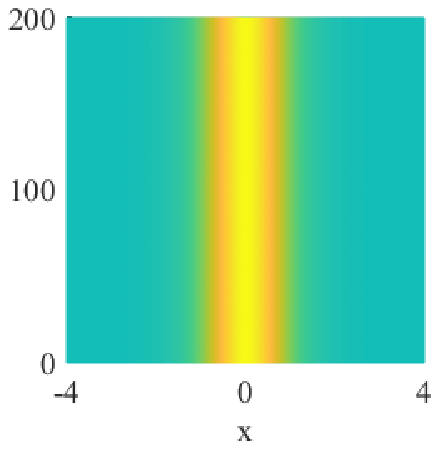}
\includegraphics[width=.32\textwidth,height=5cm]{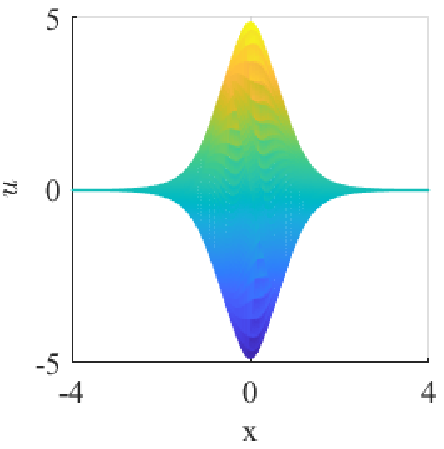}
\end{center}
	\caption{Solution of EF midpoint $t\in[0,200]$: 3D view (left), $x$--$t$ view (centre), $x$--$u$ view (right)}
	\label{fig:EFbr2}
\end{figure} 
The solution of the EF midpoint is shown in Figure~\ref{fig:EFbr}. In particular, the graph on the right shows that the motion only consists of pure oscillations and their superposition defines the correct symmetric profile. 
This method reproduces the correct qualitative behaviour of the exact solution also on longer time windows. As an example, we show in Figure~\ref{fig:EFbr2} its solution for $t\in[0,200]$.

We now set $\xi=1$ and show some quantitative comparisons. In this case the solution is a breather that oscillates with frequency $\omega=64$. We solve this problem on $(x,t)\in[-2,2]\times[0,0.2]$ with $\Delta x=0.002$ and $\Delta t=0.0032/2^n, n=0,\ldots,5$.
\begin{table}[t]
\caption{Order of convergence and error in solution and conservation laws}\label{tab:mKdV}
\small
\begingroup
\setlength{\tabcolsep}{6pt} 
\renewcommand{\arraystretch}{1.12} 
\centerline{\begin{tabular}{|c||c|c|c|c||c|c|c|c|}
\hline
&  \multicolumn{3}{r}{Classic Midpoint} && \multicolumn{3}{r}{EF Midpoint}&\\ 
\hline
$n$ & Sol err & Order & Err$_1$ & Err$_2$ & Sol err & Order & Err$_1$ & Err$_2$\\
\hline
0& 5.66e-1 &  & 2.10e-12 & 1.04e-11 & 2.48e-1 &  & 2.83e-12 & 9.60e-12\\
1& 1.62e-1 & 1.82 & 6.84e-13 & 7.70e-12 & 7.05e-2 & 1.83 & 2.06e-12 & 2.68e-11\\
2& 3.97e-2 & 2.03 & 7.98e-13 & 2.70e-12 & 1.61e-2 & 2.13 & 1.57e-12 & 3.50e-12\\
3& 8.72e-3 & 2.19 & 1.11e-12 & 5.34e-12 & 4.97e-3 & 1.70 & 9.57e-13 & 8.09e-12\\
4& 4.64e-3 & *** & 7.05e-13 & 1.19e-12 & 5.17e-3 & *** & 1.15e-12 & 2.02e-12\\
5& 5.44e-3 & *** & 5.32e-13 & 9.53e-13 & 5.65e-3 & *** & 3.47e-13 & 1.34e-12\\
\hline
\end{tabular}}
\endgroup
\end{table}
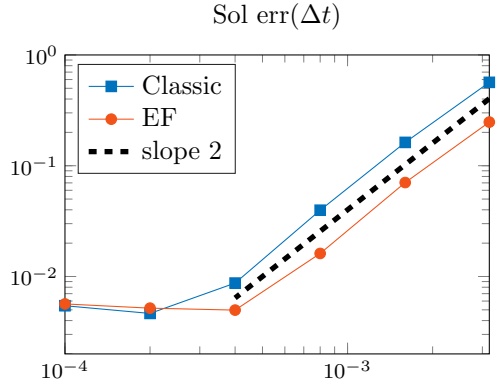
\begin{figure}[tbp]
\begin{center}
	\begin{tikzpicture}

\begin{axis}[%
width=2.221in,
height=1.566in,
at={(0.in,0.481in)},
scale only axis,
xmode=log,
xmin=0.0001,
xmax=0.0031746,
xminorticks=true,
ymode=log,
ymin=0.002,
ymax=1,
yminorticks=true,
title={Sol\,\,err$(\Delta t)$},
axis background/.style={fill=white},
legend style={legend cell align=left, align=left, draw=white!15!black,legend pos=north west}
]
\addplot [color=colorclassyblue, mark=square*, mark options={solid, colorclassyblue},mark size=2pt]
  table[row sep=crcr]{%
0.0031746	0.565945390943789\\
0.0016	0.16241155692151\\
0.0008	0.039664031381976\\
0.0004	0.008721713791316\\
0.0002	0.004640502375947\\
0.0001	0.005445157997013\\
};
\addlegendentry{Classic}

\addplot [color=colorclassyorange, mark=*, mark options={solid, colorclassyorange},mark size=2pt]
  table[row sep=crcr]{%
0.0031746	0.247646328277081\\
0.0016	0.070497006518469\\
0.0008	0.016144316602873\\
0.0004	0.004972787597417\\
0.0002	0.005178422927003\\
0.0001	0.005653037241612\\
};
\addlegendentry{EF}

\addplot [color=black, dashed, line width=2]
  table[row sep=crcr]{%
0.0031746	0.4031234064\\
0.0016	0.1024\\
0.0008	0.0256\\
0.0004	0.0064\\
};
\addlegendentry{slope 2}

\end{axis}

\end{tikzpicture}%
\end{center}
	\caption{Solution error for mKdV breather (logarithmic scale on both axis)}
	\label{fig:mKdVord}
\end{figure}

In Table~\ref{tab:mKdV} we show that the errors in the conservation laws of the two methods are of the order of the roundoffs. As before, the conservation laws satisfied by the classic midpoint method are the limit for $\nu\rightarrow 0$ of (\ref{eq:mKdVcl1}) and (\ref{eq:mKdVcl2}). Also in this case the exponentially fitted midpoint is more accurate than the classic midpoint. Both methods converge with second order until the error in time is negligible compared to the space accuracy, as is shown also in Figure~\ref{fig:mKdVord}. 
\subsubsection*{Nonlinear Schr\"odinger equation}
We solve here the nonlinear Schr\"odinger equation (\ref{NLS1}) with the initial condition yielding the following breather solution \cite{AEK}
\begin{align*}
\psi(x,t)&=\left(\frac{2\beta^2\cosh\theta+2\mathrm{i}\beta\sqrt{2-\beta^2}\sinh\theta}{2\cosh\theta-\sqrt{4-2\beta^2}\cos(\sqrt\omega\beta x)}-1\right)\sqrt\omega\mathrm{e}^{\mathrm{i}\omega t},\qquad \theta=\omega\beta\sqrt{2-\beta^2}t,\qquad\beta<\sqrt{2},\\
u(x,t)&=\operatorname{Re}(\psi),\qquad v(x,t)=\operatorname{Im}(\psi). 
\end{align*}
\begin{figure}[tbp]
\begin{center}
\includegraphics[width=.32\textwidth,height=5cm]{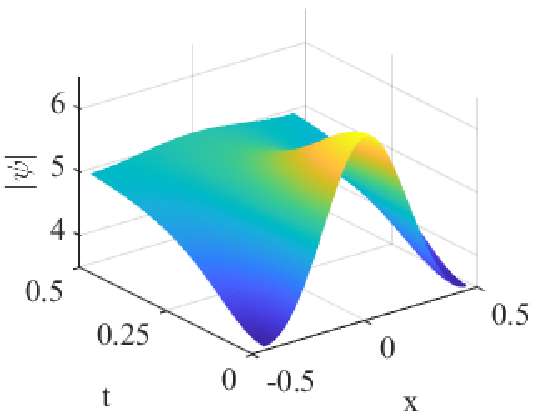}
\includegraphics[width=.32\textwidth,height=5cm]{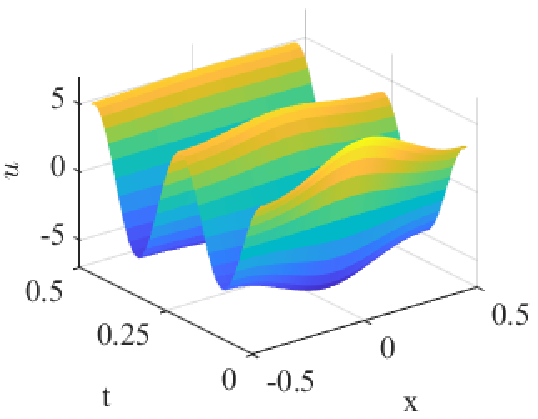}
\includegraphics[width=.32\textwidth,height=5cm]{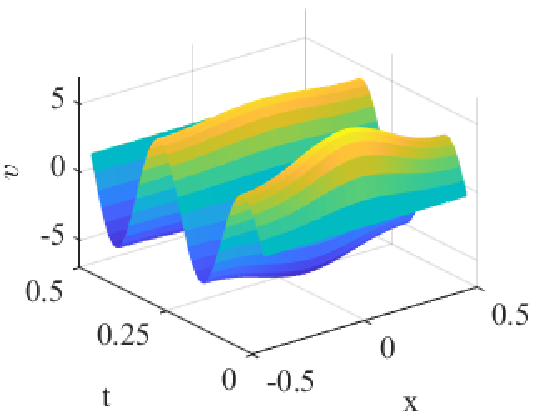}
\end{center}
	\caption{Solution of the breather problem for NLS: $|\psi(x,t)|$ (left), $u(x,t)$ (centre), $v(x,t)$ (right)}
	\label{fig:NLSbr}
\end{figure} 
\begin{table}[t]
\caption{Order of convergence and error in solution and conservation laws}\label{tab:NLS}
\small
\begingroup
\setlength{\tabcolsep}{6pt} 
\renewcommand{\arraystretch}{1.12} 
\centerline{\begin{tabular}{|c||c|c|c|c||c|c|c|c|}
\hline
&  \multicolumn{3}{r}{Classic Midpoint} && \multicolumn{3}{r}{EF Midpoint}&\\ 
\hline
$n$ & Sol err & Order & Err$_1$ & Err$_2$ & Sol err & Order & Err$_1$ & Err$_2$\\
\hline
0& 1.49e-1 &  & 3.34e-13 & 4.70e-13 & 6.72e-2 &  & 1.10e-13 & 1.48e-13\\
1& 1.70e-1 & -0.19 & 9.95e-14 & 2.18e-13 & 1.85e-2 & 1.86 & 9.59e-14 & 2.85e-13\\
2& 5.66e-2 & 1.59 & 8.17e-14 & 4.56e-14 & 4.97e-3 & 1.90 & 1.49e-13 & 2.42e-13\\
3& 1.52e-2 & 1.90 & 8.53e-14 & 8.19e-14 & 1.49e-3 & 1.74 & 9.59e-14 & 1.79e-13\\
4& 4.09e-3 & 1.89 & 9.95e-14 & 6.42e-14 & 6.23e-4 & *** & 1.28e-13 & 1.79e-13\\
5& 1.27e-3 & 1.69 & 8.88e-14 & 4.17e-14 & 4.07e-4 & *** & 8.88e-14 & 1.05e-13\\
6& 5.68e-4 & *** & 7.82e-14 & 5.57e-14 & 3.54e-4 & *** & 8.53e-14 & 3.49e-14\\
7& 3.94e-4 & *** & 8.53e-14 & 1.70e-14 & 3.41e-4 & *** & 1.03e-14 & 3.18e-14\\
\hline
\end{tabular}}
\endgroup
\end{table}
\begin{figure}[tbp]
\begin{center}
	\begin{tikzpicture}

\begin{axis}[%
width=2.221in,
height=1.566in,
at={(0.in,0.481in)},
scale only axis,
xmode=log,
xmin=0.00006,
xmax=0.01,
xminorticks=true,
ymode=log,
ymin=0.0001,
ymax=1,
yminorticks=true,
title={Sol\,\,err$(\Delta t)$},
axis background/.style={fill=white},
legend style={legend cell align=left, align=left, draw=white!15!black,legend pos=north west}
]
\addplot [color=colorclassyblue, mark=square*, mark options={solid, colorclassyblue},mark size=2pt]
  table[row sep=crcr]{%
0.01	0.149\\
0.005	0.17\\
0.0025	0.0566\\
0.00125	0.0152\\
0.000625	0.00409\\
0.0003125	0.00127\\
0.00015625	0.000568\\
0.000078125 0.0003936361246245671\\
};
\addlegendentry{Classic}

\addplot [color=colorclassyorange, mark=*, mark options={solid, colorclassyorange},mark size=2pt]
  table[row sep=crcr]{%
0.01	0.0672\\
0.005	0.0185\\
0.0025	0.00497\\
0.00125	0.00149\\
0.000625	0.000623\\
0.0003125	0.000407\\
0.00015625	0.000354\\
0.000078125 0.0003405235242281730\\
};
\addlegendentry{EF}

\addplot [color=black, dashed, line width=2]
  table[row sep=crcr]{%
0.005	0.45\\
0.0025	0.1125\\
0.00125	0.028125\\
0.000625	0.00703125\\
0.0003125	0.0017578125\\
};
\addlegendentry{slope 2}

\addplot [color=black, dashed, line width=2]
  table[row sep=crcr]{%
0.01	0.04\\
0.005	0.01\\
0.0025	0.0025\\
0.00125	0.000625\\
};

\end{axis}
\end{tikzpicture}%
\end{center}
	\caption{Solution error for NLS breather (logarithmic scale on both axis)}
	\label{fig:NLSord}
\end{figure}
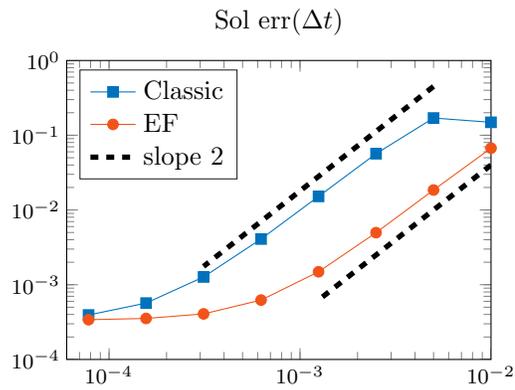
We consider the restriction of this solution to the domain $(x,t)\in [-\pi/7,\pi/7]\times[0,0.5]$ and we set $\beta=1.4$. The frequency of oscillation of $u$ and $v$ can be obtained from the initial condition and is $\omega=25$. The exact solution is plotted in Figure~\ref{fig:NLSbr} and it satisfies periodic boundary conditions. 

The numerical grids are defined with $\Delta x=2\pi/7000$, and $\Delta t=0.01/2^n$, $n=0,\ldots,6$. As shown in Table~\ref{tab:NLS} both EF midpoint and the classic midpoint preserve the conservation laws (\ref{eq:NLScl1})--(\ref{eq:NLScl2}) and their limit for $\nu\rightarrow 0,$ respectively. 

The results in Table~\ref{tab:NLS} and in Figure~\ref{fig:NLSord} show that the convergence of both methods is of the second order in time, and the error decreases with the time step until approaching the accuracy in space. However, the classic midpoint converges with the expected order only for the smaller values of $\Delta t$. The EF midpoint is up to 11 times more accurate than classic midpoint and reaches the space accuracy with larger values of $\Delta t$.
\section{Conclusions}\label{sec:concl}
In this paper, we have proved that any symplectic EFRK method preserves local conservation laws with linear or quadratic homogeneous density of suitable space discretizations of a PDE. We have also given the conditions that they need to satisfy in order to preserve conservation laws whose density is quadratic nonhomogeneous.

Space discretizations that preserve conservation laws have been obtained using the technique introduced in \cite{FoCM}. This allows to straightforwardly cope with PDEs depending on more than two independent variables similarly as done in \cite{FoCM}.  
 
On the basis of this result we have proposed exponentially fitted methods that preserve two conservation laws of the advection equation, the modified KdV equation and the system of PDEs given by the real formulation of the NLS equation. The proposed schemes are second order accurate, and higher order schemes can be similarly obtained combining higher order space discretizations with higher order EFRK methods.

Numerical tests have confirmed the conservative properties of the proposed methods as well as their convergence to the exact solution with the expected order of accuracy. Applications to problems with oscillatory solutions, such as breather waves, have shown that the proposed fitted schemes are more effective than other symplectic methods of the same order of accuracy.
\subsection*{Acknowledgements} 
This work is supported by GNCS-INDAM project and by PRIN2017-MIUR project. The authors are members of the INdAM Research group GNCS.

\end{document}